\documentclass{article}
\usepackage[a4paper, total={6in, 8in}]{geometry}
\usepackage[shortlabels]{enumitem}
\usepackage{tabularx, graphicx, amssymb}
\usepackage{amsmath, amsthm}
\usepackage{subfig}
\usepackage{booktabs}
\usepackage[affil-it]{authblk}
\usepackage{blindtext}
\usepackage{textcomp, tablefootnote}
\usepackage[capposition=top]{floatrow}
\usepackage[flushleft]{threeparttable}
\newcolumntype{L}{>{\centering\arraybackslash}m{2cm}}
\newtheorem{lemma}{Lemma}[section]
\newtheorem{theorem}{Theorem}[section]
\theoremstyle{definition}
\newtheorem{definition}[theorem]{Definition} 

\numberwithin{figure}{section}
\numberwithin{table}{section}
\newcommand{\R}{\mathbb{R}}

\newcommand{\N}{\mathbb{N}}
\numberwithin{equation}{section}
\author[1]{Hong Seng Sim}
\author[2]{Wendy Shin Yie Ling}
\author[2,3]{Wah June Leong}
\author[2]{Chuei Yee Chen\thanks{Corresponding author: cychen@upm.edu.my}}
\affil[1]{Centre for Mathematical Sciences, Universiti Tunku Abdul Rahman, 43000 Kajang, Selangor, MALAYSIA.}
\affil[2]{Department of Mathematics and Statistics, Faculty of Science, Universiti Putra Malaysia, 43400 UPM Serdang, Selangor, MALAYSIA.}
\affil[3]{Institute for Mathematical Research, Universiti Putra Malaysia, 43400 UPM Serdang, Selangor, MALAYSIA.}
 
\date{}
\title{Proximal Linearized Method for Sparse Equity Portfolio Optimization with Minimum Transaction Cost}

\begin{document}
	\maketitle

\begin{abstract}
In this paper, we propose a sparse equity portfolio optimization (SEPO) based on the mean-variance portfolio selection model. Aimed at minimizing transaction cost by avoiding small investments, this new model includes $\ell_0$-norm regularization of the asset weights to promote sparsity, hence the acronym SEPO-$\ell_0$. The selection model is also subjected to a minimum expected return. The complexity of the model calls for proximal method, which allows us to handle the objectives terms separately via the corresponding proximal operators. We develop an efficient ADMM-like algorithm to find the optimal portfolio and prove its global convergence. The efficiency of the algorithm is demonstrated using real stock data and the model is promising in portfolio selection in terms of generating higher expected return while maintaining good level of sparsity, and thus minimizing transaction cost. 
\end{abstract}	

\noindent \textbf{Keywords:}
Portfolio optimization, sparse portfolio, minimum transaction cost,  mean-variance model, proximal method
\section{Introduction} 
Introduced by Markowitz \cite{Markowitz1952} in 1952, mean-variance optimization (MVO) has been widely used in the selection of optimal investment portfolios. The success of MVO is attributed to the simplicity of its quadratic objective function, which in turn can be solved by quadratic programming (QP) that are widely available. However, MVO has flaws on its own and its implementation in portfolio optimization has been heavily criticized by academics and professionals \cite{Perrin2020}. 
One of its flaws, as pointed out by Michaud \cite{Michaud1989}, is its sensitivity towards input parameters, thus maximizes the errors associated with these inputs. This was proven theoretically and computationally by Best and Grauer \cite {Best1991}, where a slight change in the assets' expected return or correlations results in large changes in portfolio weights. Despite that, MVO remains to be one of the most successful framework due to the absence of models that are simple enough to be cast as a QP problem. 

Over the past one decade or so, the success of robust optimization techniques has allowed researchers to consider non-quadratic objective function and regularization for portfolio optimization. Consequently, the work by Daubechies et al. \cite{Daubechies2004} showed that the usual quadratic regularizing penalties can be replaced by weighted $\ell_p$-norm penalties with $p \in [1, 2]$. Two specific cases in portfolio optimization, namely lasso when $p=1$ and ridge regression when $p=2$, were considered by Brodie et al. \cite{Brodie2009} and \cite{DeMiguel2009}, respectively. While the ridge regression regularization minimizes the sample variance subject to the constraint which leads to diversification, lasso regularization encourages sparse portfolios which in turn leads to the minimization of transaction cost. Such regularizations have been studied notably by Chen et al. \cite{Chen2020}, De Mol \cite{DeMol2016} and Fastrich et al. \cite{Fastrich2015}. 

In reality, financial institutions charge their customers transaction fees for trading over the stock market. The two most common ways to charge their customers are based on a fixed transaction fee and/or a proportion of the investment amount, whichever is higher. In general, a large number of transactions will result in higher transaction cost, likely to be caused by small investments that incur fixed transaction fees. Transaction cost, in this sense, will have an effect on the portfolio optimization and the frequency of time rebalancing the portfolio. On the other hand, diversification is the practice of spreading the investments around so that the exposure to any one type of asset is limited. This practice can help to mitigate the risk and volatility in the portfolio, but potentially upsizing the number of investment components and thus, increasing the number of transactions. Therefore, a more realistic model is needed to strike a balance between diversification and minimizing transaction cost for optimal portfolio selection. 

Due to the complexity of the objective function and the regularization that are involved, many existing literature employ the alternating direction method of multipliers (ADMM), which was first introduced by Gabay and Mercier \cite{Gabay1976} in 1976. It was not until the recent decade that ADMM has received much attention in machine learning problems. The essence of ADMM is that it allows one to handle the objective terms separately when they can only be approximated using proximal operators. Its appealing features in large-scale convex optimization problems include ease of implementation and relatively good performance (see, for instance Boyd et al. \cite{Boyd2011}, Fazel et al. \cite{Fazel2013} and Perrin and Roncalli \cite{Perrin2020}). Some of the examples of ADMM-like algorithms in portfolio optimization can be found in Chen et al. \cite{Chen2020}, Dai and Wen \cite{Dai2018} and Lai et al. \cite{Lai2018}, where they are used to solve $\ell_p$-regularizing problems when $p\in[1,2]$. 
Though $\ell_0$-norm is ideal for sparsity problems, the regularization result in a discontinuous and nonconvex problem, of which the computation will turn out to be complicated. 

In this paper, we propose a new algorithmic framework to maximize the sparsity within the entire portfolio while promoting diversification, i.e. to minimize the $\ell_{0}$-norm and $\ell_{2}$-norm of the asset weights, respectively, subject to a minimum expected return via MVO. We first transform the constrained problem into an unconstrained one, to find a non-smooth and non-convex objective term. The technique of ADMM allows us to handle these terms separately, but nevertheless converges to its optimal solution. Numerical results using real data are also provided to illustrate the reliability of the proposed model and its efficiency in generating higher expected return while minimizing transaction cost when compared to the standard MVO.  

This paper is organized as follows: In Section \ref{Sec2}, we present a model for sparse equity portfolio optimization with minimum transaction cost and establish the proximal linearized method for $\ell_0$-norm minimization. Subsequently, in Section \ref{Sec3}, we present an ADMM algorithm to find the optimal portfolio of the proposed model, together with its convergence analysis. To illustrate the reliability and efficiency of our method, we present the numerical esults using real stock data in Section \ref{Sec4}. Finally, the conclusion of the paper is presented in Section \ref{Sec5}.

\section{Proximal Linearized Method for $\ell_0$-norm Minimization}\label{Sec2}
We begin with a universe of $n$ assets under consideration, with mean return vector $\mu \in \R^n$ and the covariance matrix $V \in \R^{n\times n}$. 
Let $x \in \R^n$ be the vector of asset weights in the portfolio.
Our objective is to maximize the portfolio return $\mu^T x$ and minimize the variance of portfolio return $x^T V x $, while maintaining a certain level of diversification $||x||^2_2$ and minimizing transaction cost $||x||_0$. 
The variance of the portfolio return is the measure of risk inherent in investing in a portfolio, and we shall denote this as variance risk throughout this paper. 
The portfolio is said to be pure concentrated if there exists $i$ such that $x_i = 1$ and equally-weighted if $x_i = \frac{1}{n}$ for all $i$.
Assume that the capital is fully invested, thus $e^T x= 1$ where $e \in \R^n$ is an all-one vector. 
A sparse equity portfolio optimization with minimum transaction cost (SEPO-$\ell_0$) goes as follows:
\begin{equation}\label{OF}
\begin{aligned} 
\min_{x\in \R^n} \text{    } & \frac{\beta_1}{2} x^T V x - \mu^T x + \frac{\beta_2}{2}||x||^2_2 + ||x||_0 \\
\text{s.t.        }   & 	\mu^T x \geq r, \\
						& 	e^T x= 1, \\
						& 	x\geq 0,
\end{aligned}
\end{equation}
where $\beta_1 > 0$ is a parameter for leveraging the portfolio variance risk, $\beta_2>0$ is a parameter for leveraging portfolio diversification and $r \geq 0$ is the minimum guaranteed return ratio with $r \leq \max \{\mu_i\}$. 

In a standard MVO, diversification is of general importance to reduce portfolio risk without necessarily reducing portfolio return. While diversification does not mean that we add more money into our investment, it certainly does reduce our investment value as the investment in each equity incurs transaction cost. Our proposed method takes into consideration of having diversified investments, but at the same time avoid small investments that might incur unnecessary transaction costs due to its diversification. Note that the sparsity measure of the vector $x \in \R^n$ is given by 
$$\|x\|_0 := \textnormal{number of nonzero components of } x_i.$$
Minimizing $\ell_0$-norm in (\ref{OF}) promotes sparsity within the portfolio,  since the values of $x_i$ are forced to be zero except for the large ones, thus minimizing the transaction cost.

Our model (\ref{OF}) poses computational difficulties due to the non-convexity and discontinuity of $\ell_0$-norm and the inqeuality constraint $\mu^T x \geq r$. Instead of dealing with the problem in its entirety, we employ the alternating direction method of multipliers (ADMM) such that the smooth and non-smooth terms can be handled separately. This calls for a brief introduction of proximal operators and Moreau envelope \cite{Rock1998}: 

\begin{definition}\label{Def:Prox}
Let $\psi \colon \R^n \rightarrow \R \cup \{+\infty\}$ be a proper and lower semicontinuous function and $\sigma > 0$ be a parameter. The proximal operator of $\psi$ is defined as
\begin{equation}\label{Prox}
\text{prox}_{\sigma \psi}(x) = \mathop{\arg \min} \limits_{y\in \R^n}\left\{ \psi(y) + \frac{1}{2\sigma} \Vert y-x\Vert^2_2\right\}. 
\end{equation}
Its Moreau envelope (or Moreau-Yosida regularization) is defined by
\begin{equation}\label{MY2}
\text{env}_{\sigma \psi}(x) = \inf_{y \in \R^n} \left\{ \psi(y) + \frac{1}{2\sigma} \Vert y-x\Vert^2_2\right\}.
\end{equation}
\end{definition}
\noindent
The parameter $\sigma$ can be interpreted as a trade-off between minimizing $\psi$ and being close to $x$. 
Moreau envelope, specifically, is a way to smooth a non-smooth function and it can be shown that the optimal value of $\text{env}_{\sigma \psi}(x)$ is also the optimal value of $\text{prox}_{\sigma \psi}(x)$.

Suppose now we are given a problem 
$$\min \quad \psi(x) + \phi(x)$$
where $\psi, \phi \colon \R^n \rightarrow \R \cup \{+\infty\}$ are closed proper functions, of which both $\psi$ and $\phi$ can be nonsmooth. Under the ADMM algorithm, each iteration $k$ takes on an alternating nature with the proximal operators of $\psi$ and $\phi$ being evaluated separately:
\begin{align*}
x^{k+1} & \in \text{prox}_{\sigma \psi} (z^k - u^k); \\
y^{k+1} & \in \text{prox}_{\sigma \phi} (x^{k+1} + u^k); \\
u^{k+1} & := u^k + x^{k+1} - z^{k+1}.
\end{align*}
Viewing the above as a fixed point iteration, the ADMM scheme results in $x= z$ such that 
\begin{align*}
x&= \text{prox}_{\sigma \psi} (x- u), \\
y&= \text{prox}_{\sigma \phi} (x + u).
\end{align*}

Turning our attention back to our problem (\ref{OF}), we first denote the set $R$ associated with the inequality constraint in (\ref{OF}) by 
\begin{equation}\label{eq:R}
R = \left\{ x \in \R^n \colon \mu^T x \geq r \right\},
\end{equation}
and the indicator function of $R$ by
\begin{equation}\label{eq:LR}
I_R(x) = \left\{
			\begin{array}{ll}
				0, & x\in R, \\
				\infty, & x \notin R.
			\end{array} \right.
\end{equation}
\noindent
We now define the augmented Lagrangian corresponding to problem (\ref{OF}):
\begin{equation} \label{eq:AL}
\begin{aligned}
\mathcal{L}(x,\lambda, \rho) =\ & \frac{\beta_1}{2} x^T V x - \mu^T x + \frac{\beta_2}{2}||x||^2_2 + ||x||_0 + I_R(x) \\ & + \lambda \left( e^T x -1 \right) + \frac{\rho}{2}\left( e^T x -1 \right)^2,
\end{aligned}
\end{equation}
where $\lambda$ is the usual Lagrange multiplier and $\rho> 0$ is the penalty parameter for the equality constraint $e^T x= 1$. 
We may set $\rho$ to be constant with value greater than 4 \cite{Bert2016}, leading to our problem (\ref{eq:AL}) rewritten as
\begin{equation} \label{eq:AL2}
\begin{aligned}
\mathcal{L}(x,\lambda) = & \ \frac{\beta_1}{2} x^T V x - \mu^T x + \frac{\beta_2}{2}||x||^2_2 + ||x||_0 + I_R(x) \\ &+ \lambda \left( e^T x -1 \right) + \frac{\rho}{2}\left( e^T x -1 \right)^2,
\end{aligned}
\end{equation}
where $x$ and $\lambda$ are updated via
\begin{align*}
x^{k+1} &=\mathop{\arg \min} \limits_{x}  \mathcal{L}\left( x, \lambda^k \right), \\
\lambda^{k+1} &= \lambda^{k} + \rho (e^T x^{k+1} -1).
\end{align*}
\noindent
Problem (\ref{eq:AL2}) can now be viewed as the following minimization problem:
\begin{equation}\label{eq:P+Q}
\min_{x, \lambda} \quad P(x, \lambda) + Q(x),
\end{equation}
where $P(x,\lambda)$ consists of the smooth terms given by
\begin{equation}\label{eq:P}
P(x, \lambda) = \frac{\beta_1}{2} x^T V x - \mu^T x + \frac{\beta_2}{2}||x||^2_2 + \lambda \left( e^T x -1 \right) + \frac{\rho}{2}\left( e^T x -1 \right)^2,
\end{equation}
and $Q(x)$ the non-smooth terms given by
\begin{equation}\label{eq:Q}
Q(x) = ||x||_0 + I_R(x).
\end{equation}
For the purpose of our discussion on the proximal method, we let $\lambda$ be a fixed value, say $\hat{\lambda}$, of which we deal with the following minimization problem: 
\begin{equation}\label{eq:P+Q2}
\min_{x} \quad P(x, \hat\lambda) + Q(x).
\end{equation}
Our proximal method, inspired by Beck and Teboulle \cite{Beck2009}, for minimizing the objective function in (\ref{eq:P+Q2}) can be viewed as the proximal regularization of $P$ linearized at a given point $x^k$:
\begin{equation}\label{eq:ProxProblem}
x^{k+1} \in \mathop{\arg \min} \limits_{x} \left\{ Q(x) + ( x - x^k)^T \nabla P(x^k) + \frac{1}{2t}\|x-x^k\|^2\right\},
\end{equation}
where $t>0$ and $\nabla$ denotes the derivative operator. 
Invoking simple algebra and ignoring the constant terms, (\ref{eq:ProxProblem}) can be written as
\begin{equation}\label{eq:ProxProblem2}
x^{k+1} \in \mathop{\arg \min} \limits_{x} \left\{ Q(x) + \frac{1}{2t^k} \left\Vert x - (x^k - t^k \nabla P(x^k)) \right\Vert^2\right\}.
\end{equation}
Using Definition \ref{Def:Prox}, the iterative scheme consists of a proximal step at a resulting gradient point which gives us the proximal gradient method:
\begin{equation}\label{eq:PG}
x^{k+1} \in \text{prox}_{\alpha^k Q} \left(x^k - \alpha^k \nabla P\left(x^k\right)\right),
\end{equation}
where $\alpha^k > 0$ is a suitable step size. 
Note that if $\nabla P$ is Lipschitz continuous with constant $L_c$, then the proximal gradient method is known to converge at a rate of $\mathcal{O}(1/k)$ with fixed step size $\alpha \in (0, 1/L_c]$ (Boyd et al. \cite{Boyd2011}). In the case when $L_c$ is not known, the step sizes can be chosen via line search methods (see, for example Beck and Teboulle \cite{Beck2009}). In the context of line search methods, the largest possible step size $\alpha=1$ is more desirable. Therefore, proximal gradient methods usually have  a fixed step size $\alpha = \min\{1, 1/L_c\}$.
In our case, the Lipschitz continuity of $\nabla P$ gives
\begin{align}\label{eq:LC1}
\left\Vert \nabla P (x) - \nabla P (y) \right \Vert_2 
&= \left\Vert \beta_1V(x-y) + \beta_2 (x-y) + \rho (x-y) \right\Vert_2 \nonumber\\
&\leq\Vert \beta_1 V + \beta_2 I + \rho e e^T\Vert_F \, \Vert x-y \Vert_2
\end{align}
for all $x,y \in \R^n$ where $I$ denotes the identity matrix and $\Vert \cdot \Vert_F$ denotes the Frobenius norm. 
Since the Lipschitz constant of (\ref{eq:LC1}) is not easily accessible, we can estimate it in the following way: 
\begin{align}\label{eq:LC2}
L_c 
&\leq \beta_1 \Vert V \Vert_F + \beta_2 \Vert I \Vert_F + \rho \Vert  e e^T \Vert_F \nonumber \\
& = \beta_1 \sqrt{\text{tr}\left( V V^T \right)} + \beta_2 \sqrt{n} + \rho n =: \tilde{L}_c
\end{align}
where $\text{tr}$ denotes the matrix trace.
Since $\tilde{L}_c > 1$, it is clear that $\min\{1, 1/\tilde{L}_c\}$ will always return the value $1/\tilde{L}_c$. We shall henceforth fix our step size $\alpha = 1/\tilde{L}_c$. Our choice of step size follows from the well-known descent property below:

\begin{lemma}[Descent property \cite{Beck2009}]
Let $\psi: R^n \to R$ be a continuously differentiable function with gradient $\nabla \psi$ assumed to be $L_c-$Lipschitz continuous. Then, for any $\tilde{L}_c \geq L_c$,
\begin{equation}\label{lhcond}
\psi(x) \le \psi(y) + (x-y)^T\nabla \psi(y) +\frac{\tilde{L}_c}{2}\|x-y\|^2, \;\; \forall x,y \in R^n. 
\end{equation}
\end{lemma}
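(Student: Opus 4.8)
The plan is to establish this standard ``descent lemma'' directly from the fundamental theorem of calculus applied along the segment joining $y$ to $x$, together with the Lipschitz bound on $\nabla\psi$ and the Cauchy--Schwarz inequality. The appearance of $\tilde{L}_c$ is purely a relaxation: I would first prove the sharper inequality with $L_c$ in place of $\tilde{L}_c$, and only at the very end weaken it using the hypothesis $\tilde{L}_c \ge L_c$.

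First I would fix arbitrary $x, y \in \R^n$ and introduce the scalar auxiliary function $g(t) = \psi\bigl(y + t(x-y)\bigr)$ on $t \in [0,1]$. Because $\psi$ is continuously differentiable, the chain rule gives $g'(t) = (x-y)^T \nabla\psi\bigl(y + t(x-y)\bigr)$, and the fundamental theorem of calculus yields
\begin{equation*}
\psi(x) - \psi(y) = g(1) - g(0) = \int_0^1 (x-y)^T \nabla\psi\bigl(y + t(x-y)\bigr)\, dt.
\end{equation*}

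Next I would isolate the linear term by writing $(x-y)^T\nabla\psi(y) = \int_0^1 (x-y)^T\nabla\psi(y)\,dt$ and subtracting, which gives
\begin{equation*}
\psi(x) - \psi(y) - (x-y)^T\nabla\psi(y) = \int_0^1 (x-y)^T\bigl[\nabla\psi\bigl(y + t(x-y)\bigr) - \nabla\psi(y)\bigr]\, dt.
\end{equation*}
By Cauchy--Schwarz the integrand is bounded by $\|x-y\|\,\bigl\|\nabla\psi\bigl(y+t(x-y)\bigr) - \nabla\psi(y)\bigr\|$, and the $L_c$-Lipschitz continuity of $\nabla\psi$ bounds this in turn by $\|x-y\|\cdot L_c\|t(x-y)\| = L_c\,t\,\|x-y\|^2$. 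Integrating over $[0,1]$ with $\int_0^1 t\,dt = \tfrac12$ produces $\psi(x) - \psi(y) - (x-y)^T\nabla\psi(y) \le \tfrac{L_c}{2}\|x-y\|^2$. Since $\tilde{L}_c \ge L_c$, I may replace $L_c$ by $\tilde{L}_c$ and rearrange to obtain the claimed bound \eqref{lhcond}.

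The computation is classical and presents no genuine obstacle. The only points deserving a word of care are that continuity of $\nabla\psi$ makes the integrand continuous, so that the fundamental theorem of calculus applies and the integral is well defined, and that the Lipschitz estimate must be applied with the argument difference $t(x-y)$ rather than $x-y$ --- it is precisely this factor $t$ that, upon integration, produces the constant $\tfrac12$ rather than $1$ and hence the correct descent inequality.
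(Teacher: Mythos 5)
Your proof is correct: the fundamental theorem of calculus along the segment, Cauchy--Schwarz, the Lipschitz bound applied to the difference $t(x-y)$, and the final relaxation $L_c \le \tilde{L}_c$ together give exactly inequality \eqref{lhcond}. The paper itself offers no proof of this lemma --- it is quoted directly from Beck and Teboulle \cite{Beck2009} --- and your argument is precisely the classical proof found in that reference, so there is nothing to reconcile.
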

\noindent
Using the proximal operator defined in Definition \ref{Def:Prox}, the minimization of (\ref{eq:ProxProblem}) is equivalent to the following step:
\begin{equation}\label{eq:ProxProblem3}
x^{k+1} \in \text{prox}_{\alpha Q} \left( x^k - \alpha \nabla P\left(x^k \right)\right),
\end{equation}
where $\alpha = \frac{1}{\tilde{L}_c}$.
The choice of $\tilde{L}_c$ also guarantees the sufficient decrease of our objective function under the proximal methods:
\begin{lemma}[Sufficient decrease property \cite{Bolte2014}]\label{Lem:Decrease} Let $\psi: \mathbb{R}^n \to \mathbb{R}$ be a $C^1$ function with its gradient $\nabla \psi$ being Lipschitz continuous with moduli $L_c$. Let $\phi: \mathbb{R}^n \to (-\infty, +\infty]$ be a proper and lower semicontinuous function with $\inf_{\mathbb{R}^n} \phi > -\infty$. Suppose $\tilde{L}_c$ is chosen such as $ \tilde{L}_c> L_c$. Then, for any $x \in \mbox{dom } \phi$ and any $\hat{x} \in\mathbb{R}^n$ defined by
\begin{equation}
\hat{x} \in \mbox{prox}_{\alpha \phi} \left(x - \alpha \nabla \psi\left(x\right)\right), \quad \alpha = \frac{1}{\tilde{L}_c},
\end{equation}
we have
\begin{equation}
\psi \left(\hat{x}  \right)+\phi\left(\hat{x} \right) \le \psi(x)+\phi(x)-\frac{1}{2} \left(\tilde{L}_c-L_c \right)\|\hat{x} -x\|^2.
\end{equation}
\end{lemma}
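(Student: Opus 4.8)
The plan is to combine two one-sided estimates—one coming from the optimality of the proximal step and one from the descent property quoted just above—so that a cross term cancels and only the favorable gap $\tilde{L}_c - L_c$ survives. Throughout I would write $d := \hat{x} - x$ and recall that $\alpha = 1/\tilde{L}_c$, so $\tilde{L}_c \alpha = 1$.

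First I would exploit the variational characterization of the proximal map. The standing hypotheses (that $\phi$ is proper, lower semicontinuous, and bounded below, together with $\tilde{L}_c > L_c$) guarantee that the argmin defining $\hat{x}$ is attained, and by Definition \ref{Def:Prox} any such $\hat{x}$ minimizes $y \mapsto \phi(y) + \frac{\tilde{L}_c}{2}\| y - (x - \alpha \nabla \psi(x))\|^2$. Evaluating this objective at the competitor $y = x$, which lies in $\mbox{dom }\phi$ and hence has finite $\phi$-value, and using that $\hat{x}$ attains the minimum, gives
\[
\phi(\hat{x}) + \frac{\tilde{L}_c}{2}\| \hat{x} - x + \alpha \nabla \psi(x)\|^2 \le \phi(x) + \frac{\tilde{L}_c}{2}\| \alpha \nabla \psi(x)\|^2.
\]
Expanding the square on the left, the $\alpha^2 \|\nabla \psi(x)\|^2$ contributions cancel, and since $\tilde{L}_c \alpha = 1$ I am left with
\[
\phi(\hat{x}) \le \phi(x) - d^T \nabla \psi(x) - \frac{\tilde{L}_c}{2}\|d\|^2.
\]

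Second, I would apply the descent property (the preceding Lemma) to the $C^1$ function $\psi$, but with its genuine Lipschitz modulus $L_c$ rather than $\tilde{L}_c$, obtaining
\[
\psi(\hat{x}) \le \psi(x) + d^T \nabla \psi(x) + \frac{L_c}{2}\|d\|^2.
\]
Adding the last two displayed inequalities, the linear terms $\pm\, d^T \nabla \psi(x)$ cancel exactly and the quadratic terms combine as $\frac{L_c}{2}\|d\|^2 - \frac{\tilde{L}_c}{2}\|d\|^2 = -\frac{1}{2}(\tilde{L}_c - L_c)\|d\|^2$, yielding precisely the claimed estimate $\psi(\hat{x}) + \phi(\hat{x}) \le \psi(x) + \phi(x) - \frac{1}{2}(\tilde{L}_c - L_c)\|\hat{x} - x\|^2$.

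The step most likely to be mishandled is the asymmetric use of the two constants: the quadratic penalty inside the prox forces the coefficient $\tilde{L}_c$ into the first estimate, whereas the descent lemma must be invoked with the true modulus $L_c$ in order to create the strictly positive gap $\tilde{L}_c - L_c$; invoking the descent estimate with $\tilde{L}_c$ instead would cancel the quadratic terms entirely and destroy the sufficient decrease. The only other point needing care is the admissibility of the comparison point $x$—because $x \in \mbox{dom }\phi$ its $\phi$-value is finite, so the minimality inequality is non-vacuous.
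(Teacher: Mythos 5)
Your proof is correct: the prox-optimality inequality with competitor $y=x$ (using $\tfrac{1}{2\alpha}=\tfrac{\tilde{L}_c}{2}$) combined with the descent lemma applied with the true modulus $L_c$ yields exactly the claimed bound, and your remark about the asymmetric use of $\tilde{L}_c$ versus $L_c$ is precisely the point of the argument. The paper itself gives no proof of this lemma---it is quoted from Bolte, Sabach and Teboulle (2014)---and your argument is essentially the standard one from that reference (their Lemma 2), so there is nothing to reconcile beyond noting that your write-up correctly supplies the omitted details, including the attainment of the argmin via coercivity of the quadratic term plus boundedness below of $\phi$.
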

\noindent
Note that $\text{dom } \phi$ in Lemma \ref{Lem:Decrease} defines the set of points for which proper and lower semicontinuous function $\phi: \R^n \rightarrow \R \cup \{+\infty\}$ takes on finite value:
$$\text{dom } \phi = \{x \in \R^n \colon \phi(x) < +\infty\}.$$
In view of Lemma \ref{Lem:Decrease}, we turn to our non-smooth term $Q(x)$, which can be written as the following unconstrained problem:
\begin{equation}\label{ME1}
\min_{x \in \R^n} \ \ \Vert x \Vert_0 + I_R(x).
\end{equation}
It follows from the definition of Moreau envelope that our unconstrained optimization problem (\ref{ME1}) becomes 
\begin{equation}\label{ME2}
\min_{x, y \in \R^n} \ \ \Vert y \Vert_0 + \frac{1}{2\sigma} \Vert y-x \Vert_2^2 + I_R(x), 
\end{equation}
for some $\sigma >0$. 
\noindent
 It is known that if $(x^*, y^*)$ is a solution of (\ref{ME2}) for any $\sigma >0$, then $y^* \in \text{prox}_{\sigma \Vert \cdot \Vert_0} (x^*)$. In addition, $x^*$ is a solution of problem (\ref{ME1}) if and only if $(x^*, y^*)$ is a solution to (\ref{ME2}). The proximal problem (\ref{eq:ProxProblem2}) now becomes
\begin{equation}\label{eq:ProxQ}\left\{
\begin{aligned}
& y^{k+1} \in \text{prox}_{\sigma \|\cdot\|_0} \left(y^k - \frac{1}{\tilde{L}_c}\nabla P\left( y^k\right)\right), \\
& x^{k+1} = \text{prox}_{I_R} \left( y^{k+1}\right).
\end{aligned}\right.
\end{equation}
\\
In particular, the proximal operator of the indicator function $I_R$ is reduced to Euclidean projection onto $R$:
\begin{equation}\label{eq:prox2}
\text{prox}_{I_R}(x) = \left\{
\begin{array}{ll}
x, & \text{if } \mu^T x \geq r, \\
\frac{r}{\mu^T x}x, & \text{if } \mu^T x < r.
\end{array}\right.
\end{equation}
Meanwhile, the proximal operator of the $\ell_0$-norm can be expressed in its component-wise form:
\begin{equation}\label{eq:prox1}
 		 \text{prox}_{\sigma \|\cdot\|_0} (x)= \left\{
 		 \begin{array}{cl}
 		  \{0\}, & \text{if } x_i < \sqrt{2\sigma}, \\
 		  \{0, x_i\}, & \text{if } x_i = \sqrt{2 \sigma}, \\
 		  \{x_{i}\}, & \text{if } x_i  > \sqrt{2\sigma}.
 		 \end{array}\right.
\end{equation} 
Note that $\text{prox}_{\sigma \|\cdot\|_0} (x)$ is known as a hard thresholding operator since it forces the vectors $x_i$'s except the large one to be zero \cite{Rock1998}. In other words, larger $\sigma$ results in higher sparsity and less penalization for moving away from $x$. Doing so ensures that our portfolio selection avoid small investments.

In the next section, we will see how the proximal operators are evaluated alternately to give us the optimal solution for problem (\ref{OF}). 

\section{Alternating proximal algorithm and its convergence}\label{Sec3}
In this section, we present an ADMM algorithm to find the optimal portfolio of the proposed SEPO-$\ell_0$ model (\ref{OF}) and establish its global convergence. \\ \\
\textbf{SEPO-$\ell_0$ Algorithm}
\begin{enumerate}[label = {\textbf{Step \arabic*}},align=left]\setcounter{enumi}{-1}
 \item Given $\beta_1, \beta_2, \sigma, r, V, \mu, \rho, \alpha$, initial point $(x^0, \lambda^0)$ and convergence tolerance $\varepsilon$. Set $k:=0$.
 \item Compute
  $\hat{x}^{k+1} \in \text{prox}_{\sigma \|\cdot\|_0} \! \left( x^k- \alpha \nabla P(x^k, \lambda^k)\right)$.
 \item Compute 
$
x^{k+1} = \text{prox}_{I_R} \left( \hat{x}^{k+1}\right).
$
\item Compute $\lambda^{k+1} = \lambda^{k} + \rho (e^T x^{k+1} -1)$.
\item If $\left\| \nabla P\left( x^{k+1}, \lambda^{k+1}\right)\right\| < \varepsilon$ or $k > 10000$, stop. Else, set $k:=k+1$ and go to Step 1.
\end{enumerate}

We have seen in Section \ref{Sec2} how the proposed proximal method guarantees the descent of the solution. To proceed with the convergence of SEPO-$\ell_0$ algorithm, we begin with Assumption A for any objective function $\mathcal{L} \colon \R^n \rightarrow \R \cup \{+\infty\}$ where $\mathcal{L} = \psi + \phi$:
\\ 

\noindent \textbf{Assumption A} 
\begin{enumerate}[(i)]
\item $\psi \colon \R^n \rightarrow \R $ is a continuously differentiable function where its gradient $\nabla \psi$ is Lipschitz continuous with moduli $L_c$.
\item $\phi \colon \R^n \rightarrow \R \cup \{+\infty\}$ is a proper and lower semicontinuous function.
\item $\inf_{\R^n} \psi > -\infty$ and $\inf_{\R^n} \phi > -\infty$
\end{enumerate}

\noindent SEPO-$\ell_0$ algorithm also results in nice convergence properties of (\ref{eq:AL2}):

\begin{lemma}[Convergence properties \cite{Bolte2014}]\label{Lem:CP}
Suppose that Assumption A holds. Let $\{x^k\}_{k\in \mathbb{N}}$ be a sequence generated by SEPO-$\ell_0$ algorithm. Then, the sequence $\{ \mathcal{L}(x^k, \lambda^k): k\in \mathbb{N}\}$ is nonincreasing and in particular
\begin{equation}\label{eq:CP1}
 \mathcal{L}\left(x^{k} \right) - \mathcal{L}\left(x^{k+1} \right) \geq \frac{1}{2} \left( \tilde{L}_c - L_c \right) \|x^{k+1} - x^k\|^2.
\end{equation}
Moreover, 
\begin{equation}\label{eq:CP2a}
\sum^{\infty}_{k=1} \left\Vert x^{k+1} - x^{k} \right\Vert^2 < \infty,
\end{equation}
and hence
\begin{equation}\label{eq:CP2}
\lim_{k\rightarrow \infty} \ \left\Vert x^{k+1}  - x^k\right\Vert = 0.
\end{equation}
\end{lemma}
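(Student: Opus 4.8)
The plan is to derive all three conclusions from the sufficient decrease property of Lemma \ref{Lem:Decrease}, applied with $\psi = P(\cdot, \lambda^k)$ and $\phi = Q$. First I would identify the two proximal steps of the algorithm (the hard-thresholding of Step 1 followed by the projection onto $R$ in Step 2) as a single proximal evaluation $x^{k+1} \in \text{prox}_{\alpha Q}\left(x^k - \alpha \nabla P(x^k)\right)$ for $Q = \|\cdot\|_0 + I_R$ and $\alpha = 1/\tilde{L}_c$. The preliminary task is to verify that Assumption A holds in this setting: that $\nabla P$ is $L_c$-Lipschitz (already established in (\ref{eq:LC1})--(\ref{eq:LC2})), that $Q$ is proper and lower semicontinuous, and that both $P$ and $Q$ are bounded below. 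The latter holds because $Q \geq 0$ (since $\|\cdot\|_0 \geq 0$ and $I_R \geq 0$) and $P$ is coercive thanks to the $\frac{\beta_2}{2}\|x\|_2^2$ term dominating the linear term $-\mu^T x$, so $\inf_{\R^n} P > -\infty$.

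With these hypotheses in place, Lemma \ref{Lem:Decrease} applied at $x = x^k$ and $\hat{x} = x^{k+1}$ yields directly
\begin{equation*}
P(x^{k+1}) + Q(x^{k+1}) \leq P(x^k) + Q(x^k) - \frac{1}{2}\left(\tilde{L}_c - L_c\right)\|x^{k+1} - x^k\|^2,
\end{equation*}
which is precisely (\ref{eq:CP1}). Since we have fixed $\tilde{L}_c > L_c$, the quadratic term on the right is nonnegative, so $\mathcal{L}(x^{k+1}) \leq \mathcal{L}(x^k)$ and the sequence $\{\mathcal{L}(x^k, \lambda^k)\}$ is nonincreasing.

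For the summability (\ref{eq:CP2a}), I would sum (\ref{eq:CP1}) over $k = 1, \ldots, N$, so that the right-hand side telescopes and gives
\begin{equation*}
\frac{1}{2}\left(\tilde{L}_c - L_c\right)\sum_{k=1}^{N} \|x^{k+1} - x^k\|^2 \leq \mathcal{L}(x^1) - \mathcal{L}(x^{N+1}) \leq \mathcal{L}(x^1) - \inf_{\R^n} \mathcal{L},
\end{equation*}
where the last inequality uses that $\mathcal{L}$ is bounded below by Assumption A (iii). Letting $N \to \infty$ and dividing by the positive constant $\frac{1}{2}(\tilde{L}_c - L_c)$ bounds the full series, establishing (\ref{eq:CP2a}). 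Conclusion (\ref{eq:CP2}) is then immediate, since a convergent series has vanishing terms: $\|x^{k+1} - x^k\|^2 \to 0$, and hence $\|x^{k+1} - x^k\| \to 0$.

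The main obstacle I anticipate is not the telescoping argument, which is routine, but the first step of casting the algorithm's two separate proximal operations as one proximal map of $Q = \|\cdot\|_0 + I_R$ so that Lemma \ref{Lem:Decrease} applies cleanly. A related subtlety is the role of $\lambda$: the descent estimate treats $\lambda$ as the fixed value $\lambda^k$ within iteration $k$ as in (\ref{eq:P+Q2}), whereas the dual update $\lambda^{k+1} = \lambda^k + \rho(e^T x^{k+1} - 1)$ changes it between iterations. One must therefore either check that this update does not destroy the monotonicity of $\mathcal{L}(x^k, \lambda^k)$, or restrict the descent claim to the inner minimization at fixed $\lambda$, which is the interpretation under which (\ref{eq:CP1}) follows verbatim from Lemma \ref{Lem:Decrease}.
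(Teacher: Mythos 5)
Your proposal is correct and takes essentially the same approach as the paper's proof: both apply the sufficient decrease property of Lemma \ref{Lem:Decrease} with $\psi = P$ and $\phi = Q$ to obtain (\ref{eq:CP1}), then telescope and use the boundedness below of $P$ and $Q$ to conclude (\ref{eq:CP2a}) and (\ref{eq:CP2}). The two subtleties you flag are handled in the paper exactly as you anticipate: it declares $\lambda$ fixed ``without loss of generality'' and treats Steps 1--2 of the algorithm as a single proximal evaluation of $Q = \|\cdot\|_0 + I_R$, in both cases without further justification.
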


\begin{proof} Without loss of generality, we let $\lambda$ be a fixed constant and work with $\mathcal{L}(x) = P(x) + Q(x)$ in place of $\mathcal{L}(x,\lambda)$, where $P(x)$ is given by (\ref{eq:P}) and $Q(x)$ is given by (\ref{eq:Q}). Note that $P(x)$ is differentiable and its gradient is Lipschitz continuous with moduli $L_c$. 
 Invoking SEPO-$\ell_0$ algorithm and by Lemma \ref{Lem:Decrease}, we have
		\begin{equation}\label{eq:CP1P}
		P\left(x^{k+1} \right) + Q \left(x^{k+1} \right) \leq P\left(x^k \right) + Q\left(x^k \right) - \frac{1}{2} \left( \tilde{L}_c - L_c \right) \left\|x^{k+1} - x^{k}\right\|^2, 
		\end{equation}
where $\tilde{L}_c$ is given by (\ref{eq:LC2}).
Writing $\mathcal{L}(x^k) = P(x^k) + Q(x^k)$ in (\ref{eq:CP1P}) and rearranging it lead to (\ref{eq:CP1}), which asserts that the sequence $\{ \mathcal{L}(x^k, \lambda^k): k\in \mathbb{N}\}$ is nonincreasing. 

Note that $P$ and $Q$ are bounded below (see Assumption A), and hence $\mathcal{L}$ converges to some $\underline{\mathcal{L}}$. Let $N \in \N_+$. We sum up (\ref{eq:CP1}) from $k=0$ to $k=N-1$ to get
\begin{align*}
\sum^{N-1}_{k=0} \left\Vert x^{k+1} - x^k \right\Vert^2 & \leq \frac{2}{\tilde{L}_c - L_c}\sum^{N-1}_{k=0} \left( \mathcal{L}\left( x^k\right) - \mathcal{L}\left( x^{k+1}\right)\right) \\ 
&= \frac{2}{\tilde{L}_c - L_c} \left( \mathcal{L}\left( x^0\right) - \mathcal{L}\left( x^{N}\right)\right) \\
& \leq \frac{2}{\tilde{L}_c - L_c} \left( \mathcal{L}\left( x^0\right) - \underline{\mathcal{L}}\right).
\end{align*}
It follows that (\ref{eq:CP2a}) and (\ref{eq:CP2}) hold when we take the limit as $N\rightarrow \infty$.
\end{proof}

Before we present the result that sums up the properties of the sequence $\{x^k\}_{k\in \N}$ generated by SEPO-$\ell_0$ algorithm starting from the initial point $x^0$, we first give some basic notations. We denote by $\text{crit } \mathcal{L}$ the set of critical points of $\mathcal{L}$ and $\omega\left( x^0\right)$ the set of all limit points, where
$$\omega\left( x^0\right) = \left\{ \overline{x} \in \R^n : \exists \text{ an increasing sequence of integers } \{k_j\}_{j \in \N} \text{ such that } x^{k_j} \rightarrow \overline{x} \text{ as } j\rightarrow \infty\right\}. $$
Given any set $\Omega \subset\R^n$ and any point $x\in \R^n$, the distance from $x$ to $\Omega$ is denoted and defined by
$$\text{dist} (x,\Omega) := \inf \left\{ \|y - x \| \colon y \in \Omega\right\}.$$
When $\Omega = \emptyset$, then we invoke the usual convention that $\inf \emptyset = \infty$ and hence $\text{dist} (x,\Omega) = \infty$ for all $x$. 
\begin{lemma}[Properties of limit points \cite{Bolte2014}]\label{Lem:LP}
Suppose that Assumption A holds. Let $\{x^k\}_{k\in \N}$ be a bounded sequence generated by SEPO-$\ell_0$ algorithm. Then, the following hold:
\begin{enumerate}[(a)]
\item $\omega\left( x^0\right)$ is a nonempty, compact and connected set.
\item $\omega\left( x^0\right) \subset \text{crit } \mathcal{L}$.
\item $\lim_{k\rightarrow\infty} \text{dist }\left( x^k, \omega\left(x^0\right)\right)=0$.
\item The objective function $\mathcal{L}$ is finite and constant on $\omega\left( x^0\right)$.
\end{enumerate}
\end{lemma}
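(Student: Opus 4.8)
The plan is to combine the two outputs of Lemma~\ref{Lem:CP} — the monotone convergence of $\{\mathcal{L}(x^k)\}$ to some value $\underline{\mathcal{L}}$ and the summability yielding $\|x^{k+1}-x^k\|\to 0$ — with the boundedness hypothesis, and to prove the four claims in the order (a-compactness), (c), (a-connectedness), (d), (b), since the later parts rely on the earlier ones. I would first dispatch the easy topological facts. Nonemptiness of $\omega(x^0)$ is immediate from Bolzano--Weierstrass applied to the bounded sequence. The set of subsequential limits of any sequence is closed, and here it sits inside the closure of a bounded set, hence is bounded; thus $\omega(x^0)$ is compact. For (c), I would argue by contradiction: if $\text{dist}(x^k,\omega(x^0))\not\to 0$, I extract a subsequence bounded away from $\omega(x^0)$ by some $\varepsilon>0$, and boundedness produces a convergent sub-subsequence whose limit lies in $\omega(x^0)$ by definition, contradicting the distance bound.

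For connectedness I would use the standard fact that the limit set of a bounded sequence with vanishing successive increments is connected. Suppose $\omega(x^0)=A\sqcup B$ with $A,B$ nonempty, closed, and enclosed in disjoint open sets $U_A\supset A$, $U_B\supset B$ separated by a positive distance $\delta$. Since each of $A,B$ contains cluster points, the sequence enters $U_A$ and $U_B$ infinitely often; because $\|x^{k+1}-x^k\|\to 0$, once the increments fall below $\delta$ the sequence cannot cross the gap in a single step, so it must land in $\R^n\setminus(U_A\cup U_B)$ infinitely often. A convergent subsequence of these gap iterates yields a cluster point outside $A\cup B=\omega(x^0)$, a contradiction.

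The crux is (d), which underpins (b). Fix $\overline{x}\in\omega(x^0)$ with $x^{k_j}\to\overline{x}$; then $x^{k_j+1}\to\overline{x}$ as well since the increments vanish. Writing $\mathcal{L}=P+Q$ with $P$ smooth and $Q$ only lower semicontinuous, the difficulty is that $Q$ (carrying $\|\cdot\|_0$ and $I_R$) is discontinuous, so I cannot simply pass $\mathcal{L}$ through the limit. Instead I would use that $x^{k+1}$ minimizes $Q(\cdot)+\langle\,\cdot-x^k,\nabla P(x^k)\rangle+\frac{\tilde{L}_c}{2}\|\cdot-x^k\|^2$ and insert $\overline{x}$ as a competitor — admissible because the projection step keeps every iterate in the closed set $R$, so $\overline{x}\in R$ and $Q(\overline{x})<\infty$. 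Letting $j\to\infty$ and using continuity of $P,\nabla P$ together with $\|x^{k_j+1}-x^{k_j}\|\to 0$ gives $\limsup_j Q(x^{k_j+1})\le Q(\overline{x})$; combined with the lower semicontinuity of $Q$ this forces $Q(x^{k_j+1})\to Q(\overline{x})$. Continuity of $P$ then yields $\mathcal{L}(x^{k_j+1})\to\mathcal{L}(\overline{x})$, and since the full sequence $\mathcal{L}(x^k)\to\underline{\mathcal{L}}$, every cluster point satisfies $\mathcal{L}(\overline{x})=\underline{\mathcal{L}}$, so $\mathcal{L}$ is finite and constant on $\omega(x^0)$.

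Finally, for (b) I would read off the first-order optimality of the proximal subproblem, $0\in\partial Q(x^{k+1})+\nabla P(x^k)+\tilde{L}_c(x^{k+1}-x^k)$, and set $v^{k+1}:=\nabla P(x^{k+1})-\nabla P(x^k)-\tilde{L}_c(x^{k+1}-x^k)$, so that $v^{k+1}\in\nabla P(x^{k+1})+\partial Q(x^{k+1})=\partial\mathcal{L}(x^{k+1})$. Lipschitz continuity of $\nabla P$ and $\|x^{k+1}-x^k\|\to 0$ then give $\|v^{k+1}\|\to 0$. Along the subsequence, $x^{k_j+1}\to\overline{x}$, $v^{k_j+1}\to 0$, and the value convergence $\mathcal{L}(x^{k_j+1})\to\mathcal{L}(\overline{x})$ from (d) let me invoke the closedness of the graph of the limiting subdifferential to conclude $0\in\partial\mathcal{L}(\overline{x})$, i.e. $\overline{x}\in\text{crit }\mathcal{L}$. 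I expect the main obstacle to be exactly the value-recovery step in (d): securing $\mathcal{L}(x^{k_j})\to\mathcal{L}(\overline{x})$ despite the discontinuity of the $\ell_0$ and indicator terms, since without it the subdifferential closedness needed in (b) cannot be applied.
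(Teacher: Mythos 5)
Your proof is correct, but it takes a genuinely different route from the paper in the most literal sense: the paper's entire proof of this lemma is the citation ``See Bolte et al.\ \cite{Bolte2014}'', whereas you reconstruct the argument in full. What you wrote is essentially the limit-point lemma of \cite{Bolte2014} specialized to $\mathcal{L}=P+Q$: parts (a) and (c) from boundedness plus the vanishing increments supplied by Lemma~\ref{Lem:CP}; connectedness by the no-gap-crossing argument; part (d) by playing the competitor $\overline{x}$ against the minimality of $x^{k+1}$ in the prox subproblem, which is exactly the right device to defeat the discontinuity of $\|\cdot\|_0+I_R$ (lower semicontinuity gives one inequality, prox-minimality the other); and part (b) by assembling $v^{k+1}=\nabla P(x^{k+1})-\nabla P(x^k)-\tilde{L}_c(x^{k+1}-x^k)\in\partial\mathcal{L}(x^{k+1})$, noting $\|v^{k+1}\|\to 0$, and closing the graph of the limiting subdifferential using the value convergence from (d). Your ordering of the parts and your identification of (d) as the crux match the cited source; the benefit of your version over the paper's is that it makes visible exactly where each hypothesis enters.

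One caveat you should state explicitly. Both your (d) and (b) use that $x^{k+1}$ minimizes $Q(\cdot)+\langle\,\cdot-x^k,\nabla P(x^k)\rangle+\frac{\tilde{L}_c}{2}\|\cdot-x^k\|^2$, i.e.\ that $x^{k+1}\in\text{prox}_{\alpha Q}\left(x^k-\alpha\nabla P\left(x^k\right)\right)$ as in (\ref{eq:ProxProblem3}) with $Q$ given by (\ref{eq:Q}). The literal SEPO-$\ell_0$ algorithm instead computes two successive operators, $\hat{x}^{k+1}\in\text{prox}_{\sigma\|\cdot\|_0}(\cdot)$ followed by $x^{k+1}=\text{prox}_{I_R}\left(\hat{x}^{k+1}\right)$, and a composition of proximal maps is not in general the proximal map of the sum; moreover the map (\ref{eq:prox2}) is a radial rescaling rather than the Euclidean projection onto the half-space $R$, so Fermat's rule for the composite step is not immediately available. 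This gap is inherited from the paper rather than introduced by you --- the paper's own proof of Lemma~\ref{Lem:CP} invokes Lemma~\ref{Lem:Decrease} under the same identification, and the $\lambda$-update is likewise frozen there --- but since your write-up, unlike the bare citation, exposes the reliance, you should either record it as a standing assumption or supply the optimality/sufficient-decrease property for the composed step.
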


\begin{proof}
See Bolte et al. \cite{Bolte2014}.
\end{proof}

What remains is its global convergence, of which we shall establish by means of the Kurdyka-\L ojasiewicz (KL) property \cite{Bolte2014} as an extension of \L ojasiewicz gradient inequality \cite{Loj1963} for non-smooth functions. We first show that the objective function (\ref{eq:AL2}) is semi-algebraic and therefore is a KL function. This, in turn, is crucial in giving us the convergence property of the sequences generated via SEPO-$\ell_0$ algorithm. We begin by recalling notations and definitions concerning subdifferential (see, for instance \cite{Bolte2014, Rock1998}) and KL property. 

\begin{definition}\label{Def:SubDiff}
Let $\phi\colon \R^n \rightarrow \R\cup\{+\infty\}$ be a proper and lower semicontinuous function. The (limiting) subdifferential of $\phi$ at $x \in \text{dom } \phi$, is denoted and defined by
\begin{equation}\label{eq:SubDiff}
\partial \phi (x) = \left\{ u \in \R^n \colon \exists x^k \rightarrow x, \ \phi(x^k) \rightarrow \phi(x),  \ u^k \rightarrow u, \ \mathop{\lim \inf} \limits_{y\rightarrow x^k}\  \frac{\phi(y)- \phi(x^k) - \langle u^k, y-x^k \rangle}{\|y-x^k\|} \geq 0\right\}.
\end{equation}
The point $x$ is called a (limiting) critical point of $\phi$ if $0\in \partial \phi(x)$.
\end{definition}
\noindent
It follows that $0\in \partial \phi(x)$ if $x\in \R^n$ is a local minimizer of $\phi$. For continuously differentiable $\phi$, then $\partial \phi (x) = \{\nabla \phi\}$ and hence we have the usual gradient mapping $\nabla \phi$ from $x \in \text{dom } \phi$ to $\nabla \phi(x)$. 
If $\psi$ is convex, the subdifferential (\ref{eq:SubDiff}) turns out to be the classical Fr\'{e}chet subdifferential (see \cite{Rock1998}). 

Let $\eta \in (0,\infty]$ and denote by $\Phi_\eta$ be the class of all concave and continuous functions $\varphi \colon [0,\eta) \rightarrow \R_+$ that are continuously differentiable on $(0,\eta)$ and continuous at 0 with $\varphi(0)=0$ and $\varphi'(s)>0$ for all $s\in (0,\eta)$.

\begin{definition}[Kurdyka-\L ojasiewicz (KL) property]\label{Def:KL}
Let $\phi \colon \R^n \rightarrow \R\cup \{+\infty\}$ be a proper and lower semicontinuous function. The function $\phi$ is said to have the Kurdyka-\L ojasiewicz (KL) property at $\bar{u} \in \text{dom } \partial \phi:= \left\{ u \in \R^n \colon \partial \phi(u) \neq \emptyset\right\}$ if there exist $\eta \in (0, +\infty]$, a neighbourhood U of $\bar{u}$ and a function $\varphi \in \Phi_{\eta}$, such that for all $u \in U \cap \left[ \phi(\bar{u}) < \phi (u) < \phi(\bar{u})+\eta\right]$, the following inequality holds:
\begin{equation}\label{eq:KLProperty}
\varphi' \left( \phi(u) - \phi(\bar{u})\right)\ \text{dist}\left( 0, \partial \phi\left(u \right)\right) \geq 1.
\end{equation}
Moreover, $\phi$ is called a KL function if it satisfies the KL property at each point of $\text{dom }\phi$.
\end{definition}

The definition above uses the sublevel sets:  Given $a,b \in \R$, the sublevel sets of a function $\phi$ are denoted and defined by
$$[a\leq \phi \leq b] := \left\{ x\in \R^n \colon a \leq \phi(x) \leq b \right\}.$$
Similar definition holds for $[a < \phi < b]$. The level sets of $\phi$ are denoted and defined by
$$[\phi = a] := \left\{ x\in \R^n \colon \phi(x) = a\right\}.$$

Closely related to the KL function is the semi-algebraic function, which is crucial in the proof of the convergence property of our proposed method.

\begin{definition}[Semi-algebraic sets and functions]~
\begin{enumerate}[(i)]
\item A subset $\Omega \subset \R^n$ is called a semi-algebraic set if there exists a finite number of real polynomial functions $p_{ij}$ and $q_{ij}$ such that 
\begin{equation}\label{eq:SemiSet}
\Omega = \bigcup^p_{j=1} \bigcap^{q}_{i=1} \bigl\{ u \in \R^n \colon p_{ij}(u) = 0 \text{ and } q_{ij} (u) <0\bigr\}.
\end{equation}
\item A function $\phi \colon \R^n \rightarrow \R \cup \{+\infty\}$ is called a semi-algebraic function if its graph
\begin{equation}\label{eq:SemiFunc}
\bigl\{ (u,t) \in \R^{n+1} \colon \phi(u) = t\bigr\}
\end{equation}
is a semi-algebraic subset of $R^{n+1}$.
\end{enumerate}
\end{definition} 

It follows that semi-algebraic functions are indeed KL functions, of which the result below is a non-smooth version of the \L ojasiewicz gradient inequality.

\begin{theorem}[\cite{Bolte2007, Bolte2007b}]\label{Theo:Semi}
Let $\phi \colon \R^n \rightarrow \R \cup \{+\infty\}$ be a proper and lower semicontinuous function. If $\phi$ is semi-algebraic, then it is a KL function. 
\end{theorem}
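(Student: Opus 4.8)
This is a deep structural result whose full proof rests on real semi-algebraic geometry; the plan is to reduce the KL property to a nonsmooth \L ojasiewicz gradient inequality and then to exhibit the desingularizing function $\varphi$ explicitly. Fix $\bar u \in \text{dom } \partial \phi$ and, after translating, assume $\phi(\bar u) = 0$. The target is the existence of constants $C > 0$, $\theta \in [0,1)$, $\eta > 0$ and a neighbourhood $U$ of $\bar u$ such that
\[
\text{dist}\left(0, \partial\phi(u)\right) \geq C\,\bigl(\phi(u) - \phi(\bar u)\bigr)^{\theta}
\]
for every $u \in U$ with $0 < \phi(u) - \phi(\bar u) < \eta$. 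Granting this, I would set $\varphi(s) = \frac{1}{C(1-\theta)} s^{1-\theta}$; then $\varphi \in \Phi_\eta$ (it is concave because $1-\theta \in (0,1]$, continuous, with $\varphi(0)=0$ and $\varphi'(s) = C^{-1} s^{-\theta} > 0$), and multiplying the displayed inequality through by $\varphi'(\phi(u) - \phi(\bar u)) = C^{-1}(\phi(u)-\phi(\bar u))^{-\theta}$ gives exactly (\ref{eq:KLProperty}). Thus the whole problem collapses to proving the \L ojasiewicz inequality above.

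The first genuine step is to show that the graph of the limiting subdifferential $\partial\phi$ is itself a semi-algebraic subset of $\R^n \times \R^n$. By hypothesis the graph of $\phi$ is semi-algebraic, and the regular (Fr\'echet) subdifferential is defined by the first-order condition $\liminf_{y \to x}\bigl(\phi(y) - \phi(x) - \langle u, y - x\rangle\bigr)/\|y-x\| \geq 0$; since the class of semi-algebraic sets is closed under finite unions, intersections, complements and projections (Tarski--Seidenberg), this condition carves out a semi-algebraic set, and the limiting subdifferential (\ref{eq:SubDiff}), being an outer limit of the regular one, remains semi-algebraic. Consequently the single-variable function
\[
m(r) := \inf\left\{ \|\xi\| \colon \xi \in \partial\phi(x),\ \|x - \bar u\| \le \varepsilon,\ \phi(x) = r \right\}
\]
is semi-algebraic on an interval $(0,\eta)$, with the convention $m(r) = +\infty$ when the constraint set is empty.

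The heart of the argument is then one-variable semi-algebraic structure theory. By the monotonicity lemma, $m$ is continuous and of constant sign on a sufficiently small interval $(0,\eta)$, and by the curve-selection lemma one may realise the infimum along a semi-algebraic arc $t \mapsto x(t)$ emanating from $\bar u$; combining this with the Puiseux/growth dichotomy for one-variable semi-algebraic functions --- every such function is asymptotically equivalent to $c\,r^{\alpha}$ near $0$ --- produces the exponent $\theta \in [0,1)$ and the constant $C$ in the \L ojasiewicz inequality. I expect the main obstacle to be precisely this step: controlling $\text{dist}(0,\partial\phi(u))$ uniformly over level sets rather than along a single curve, which in the nonsmooth setting requires a (semi-algebraic) Whitney stratification of $\text{dom }\phi$ so that limiting subgradients can be compared with the gradients of the restrictions of $\phi$ to the strata, together with the verification that the arc furnished by curve selection actually attains the minimal-slope behaviour. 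Once the power-law bound is secured, assembling $\varphi$ as above and checking membership in $\Phi_\eta$ is routine, and $\phi$ then satisfies the KL property at every point of $\text{dom }\phi$, hence is a KL function.
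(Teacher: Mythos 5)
The paper does not prove this theorem at all: it is stated as a quotation from Bolte et al.\ \cite{Bolte2007, Bolte2007b} and used as a black box (the same way Lemma \ref{Lem:LP} and Theorem \ref{Theo:GC} are handled), so there is no internal argument to compare yours against. Judged on its own merits, your outline is a faithful road map of the strategy in those references: the reduction of the KL property to a power-form \L ojasiewicz inequality, with $\varphi(s) = \frac{1}{C(1-\theta)}\,s^{1-\theta}$ as the desingularizing function, is correct and complete as stated; the semi-algebraicity of the graph of $\partial\phi$ via quantifier elimination (Tarski--Seidenberg applied to the Fr\'echet subdifferential and then to its outer limit) is correct; and the appeal to one-variable semi-algebraic structure theory (monotonicity lemma, Puiseux-type growth dichotomy) for the marginal slope function $m(r)$ is the right tool.

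The problem is that the step you yourself flag as ``the main obstacle'' is not a technical detail to be expected --- it is the entire content of the theorem, and your proposal leaves it unproven. Two things are missing. First, you need $m(r) > 0$ for all small $r > 0$ before any growth dichotomy can give a positive exponent bound; this is where the semi-algebraic (nonsmooth) Morse--Sard theorem enters, guaranteeing that a semi-algebraic function has only finitely many critical values, so that $(0,\eta)$ can be taken free of them. Second, and more seriously, the curve-selection argument controls the slope along one arc, whereas the KL inequality requires a bound uniform over $U \cap [0 < \phi < \eta]$; in the cited work this is resolved by taking a Whitney semi-algebraic stratification of the graph of $\phi$ and proving a projection formula --- every limiting subgradient $\xi \in \partial\phi(x)$ at a point $x$ of a stratum $M$ projects onto $\nabla(\phi|_M)(x)$, hence $\|\xi\| \geq \|\nabla(\phi|_M)(x)\|$ --- which reduces the nonsmooth inequality to finitely many smooth ones on strata. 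You name these ingredients but do not carry them out, so what you have is an accurate sketch of the Bolte--Daniilidis--Lewis proof rather than a proof; the honest alternatives are to execute the stratification argument in full or to cite the result, which is exactly what the paper does.
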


Theorem \ref{Theo:Semi} allows us to avoid the technicality in proving the KL property. This is due to the broad range of functions and sets that are indeed semi-algebraic (see, for instance  \cite{Bochnak2013, Bolte2014}). 
Some of the examples of semi-algebraic functions include real polynomial functions, and 
indicator functions of semi-algebraic sets. 
Apart from that, finite sums and products of semi-algebraic functions, as well as scalar products are all semi-algebraic. 

We are now ready to give the global convergence result of the proposed model (\ref{OF}).

\begin{theorem}[Global convergence]\label{Theo:GC}
Suppose the objective function $\mathcal{L} \colon \R^n \rightarrow \R \cup \{+\infty\}$ is a KL function such that Assumption A holds. Then the sequence $\{x^k\}_{k\in \mathbb{N}}$ generated by  SEPO-$\ell_0$ algorithm converges to a critical point $x^*$.
\end{theorem}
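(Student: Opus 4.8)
The plan is to follow the abstract convergence methodology for nonconvex, nonsmooth descent schemes due to Bolte, Sabach and Teboulle \cite{Bolte2014}, which reduces global convergence to verifying three conditions on $\{x^k\}$: a sufficient-decrease inequality, a subgradient (relative-error) bound, and a continuity condition at limit points. The decrease condition is already in hand from Lemma \ref{Lem:CP}, and the limit-point information from Lemma \ref{Lem:LP}, so the technical core is the subgradient bound; once that is available, the KL hypothesis is used to show the sequence has \emph{finite length} and is therefore Cauchy, and Lemma \ref{Lem:LP}(b) identifies the limit as a critical point.

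First I would record the sufficient-decrease condition, which is exactly \eqref{eq:CP1}: with $a := \tilde{L}_c - L_c > 0$ one has $\mathcal{L}(x^k) - \mathcal{L}(x^{k+1}) \geq \tfrac{a}{2}\|x^{k+1}-x^k\|^2$. Next I would establish the subgradient bound. Writing the first-order optimality condition for the proximal update $x^{k+1} \in \text{prox}_{\alpha Q}(x^k - \alpha \nabla P(x^k))$ gives $\tfrac{1}{\alpha}(x^k - x^{k+1}) - \nabla P(x^k) \in \partial Q(x^{k+1})$; adding $\nabla P(x^{k+1})$ and using the sum rule $\partial \mathcal{L}(x^{k+1}) = \nabla P(x^{k+1}) + \partial Q(x^{k+1})$ for a $C^1$-plus-nonsmooth splitting (Definition \ref{Def:SubDiff}) produces the explicit element $w^{k+1} := \nabla P(x^{k+1}) - \nabla P(x^k) + \tfrac{1}{\alpha}(x^k - x^{k+1}) \in \partial \mathcal{L}(x^{k+1})$. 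Lipschitz continuity of $\nabla P$ then yields $\text{dist}(0,\partial\mathcal{L}(x^{k+1})) \leq \|w^{k+1}\| \leq (L_c + \tfrac{1}{\alpha})\|x^{k+1}-x^k\| =: b\,\|x^{k+1}-x^k\|$.

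Then I would secure boundedness of $\{x^k\}$ so that Lemma \ref{Lem:LP} applies: since $V$ is positive semidefinite and $\beta_2 > 0$, the smooth part $P$ is coercive while $\|\cdot\|_0 + I_R \geq 0$, so $\mathcal{L}$ is coercive and the descent property confines the iterates to the bounded sublevel set $[\mathcal{L} \leq \mathcal{L}(x^0)]$. Fixing the common value $\mathcal{L}^*$ of $\mathcal{L}$ on the compact connected set $\omega(x^0)$ (Lemma \ref{Lem:LP}(a),(d)), I would apply a uniformized KL inequality on a neighbourhood of $\omega(x^0)$ — available because $\mathcal{L}$ is semi-algebraic (its quadratic and linear terms, $\|\cdot\|_0$, and $I_R$ are semi-algebraic, and finite sums preserve this), hence a KL function by Theorem \ref{Theo:Semi}. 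Combining the concavity of $\varphi \in \Phi_\eta$ with the KL inequality, the decrease bound, and the subgradient bound gives, for all large $k$,
\[
\varphi(\mathcal{L}(x^k)-\mathcal{L}^*) - \varphi(\mathcal{L}(x^{k+1})-\mathcal{L}^*) \geq \frac{\mathcal{L}(x^k)-\mathcal{L}(x^{k+1})}{\text{dist}(0,\partial\mathcal{L}(x^k))} \geq \frac{a\,\|x^{k+1}-x^k\|^2}{2b\,\|x^k-x^{k-1}\|}.
\]
Applying $2\sqrt{uv}\leq u+v$ converts this into $2\|x^{k+1}-x^k\| \leq \|x^k-x^{k-1}\| + \tfrac{2b}{a}\big(\varphi(\mathcal{L}(x^k)-\mathcal{L}^*) - \varphi(\mathcal{L}(x^{k+1})-\mathcal{L}^*)\big)$; summing over $k$ telescopes the right-hand side (using $\varphi \geq 0$), whence $\sum_k \|x^{k+1}-x^k\| < \infty$. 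The sequence is therefore Cauchy, converges to some $x^*$, and $x^* \in \text{crit }\mathcal{L}$ by Lemma \ref{Lem:LP}(b).

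I expect the subgradient bound to be the main obstacle. The $\ell_0$ term is discontinuous and nonconvex, and the algorithm actually performs \emph{two} separate proximal steps — hard thresholding in Step 1 followed by the projection $\text{prox}_{I_R}$ in Step 2 — rather than a single $\text{prox}_{\alpha Q}$, so one must justify the limiting-subdifferential sum rule and the optimality characterization carefully for the composed update and verify that the resulting error is still controlled by $\|x^{k+1}-x^k\|$. A secondary subtlety is the uniformization of the KL inequality over the whole connected limit set $\omega(x^0)$ rather than at a single point, together with the degenerate case in which $\mathcal{L}(x^k)$ attains $\mathcal{L}^*$ after finitely many steps, where the iterates are eventually constant and convergence is immediate.
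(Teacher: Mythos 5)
Your overall strategy coincides with the paper's: the paper offers no proof of its own, deferring entirely to Bolte, Sabach and Teboulle \cite{Bolte2014}, and your proposal is a faithful reconstruction of exactly that KL-based finite-length argument (sufficient decrease, relative-error subgradient bound, uniformized KL inequality over $\omega(x^0)$, telescoping to get a Cauchy sequence). Your coercivity argument securing boundedness of the iterates is a genuine addition that the paper lacks, since its Lemma \ref{Lem:LP} simply \emph{assumes} the sequence is bounded.

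However, the obstacle you flag at the end is not a subtlety that careful bookkeeping will dispatch; it is a genuine gap, and it remains open in your proposal. Your subgradient bound is derived from the optimality condition of a single update $x^{k+1} \in \text{prox}_{\alpha Q}\left(x^k - \alpha \nabla P(x^k)\right)$ with $Q = \|\cdot\|_0 + I_R$, but the algorithm never performs this update: Step 1 applies $\text{prox}_{\sigma\|\cdot\|_0}$ (with prox parameter $\sigma$, not the step size $\alpha$, a mismatch that already breaks the identification with a prox-gradient step on $Q$), and Step 2 then applies $\text{prox}_{I_R}$, which rescales the point by $r/(\mu^T x)$ whenever the return constraint is violated. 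The optimality condition of Step 1 yields an element of $\partial\|\cdot\|_0$ at $\hat{x}^{k+1}$, not at $x^{k+1}$, and because $\|\cdot\|_0$ is discontinuous this information does not survive the projection; moreover the sum rule $\partial\left(\|\cdot\|_0 + I_R\right) = \partial\|\cdot\|_0 + \partial I_R$ for limiting subdifferentials of two nonconvex, nonsmooth functions itself requires a qualification condition you have not verified. On top of this, Step 3 updates $\lambda$ every iteration, so $\mathcal{L}(\cdot,\lambda^k)$ is a different function at each step, whereas the KL finite-length machinery applies to a fixed objective; freezing $\lambda$ (as you do, following Lemma \ref{Lem:CP}) analyzes a different algorithm from the one stated. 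Until the relative-error bound $\text{dist}\left(0,\partial\mathcal{L}(x^{k+1})\right) \leq b\,\|x^{k+1}-x^k\|$, or a workable substitute, is proved for the actual composed iteration, the convergence claim is not established. To be fair, this gap is inherited from the paper itself, which cites \cite{Bolte2014} without checking that SEPO-$\ell_0$ satisfies the hypotheses of that framework; your proposal has the merit of making the missing verification explicit.
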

\begin{proof}
See Bolte et al. \cite{Bolte2014}.
\end{proof}

By virtue of Theorem \ref{Theo:GC}, we now show that each term in (\ref{eq:AL2}) is semi-algebraic since the finite sum of semi-algebraic functions is also semi-algebraic. It is obvious that function (\ref{eq:AL2}) is a sum of a smooth function $P(x)$, $\ell_0$-norm and indicator function. The function $P(x)$ given by (\ref{eq:P}) is a linear combination of linear and quadratic functions, and hence $P(x)$ is a real polynomial function, which in turn is semi-algebraic. 

As a specific example given by Bolte et al. \cite{Bolte2014}, $\ell_0$-norm is nothing but the sparsity measure of the vector $x \in \R^n$, which 
is indeed semi-algebraic. In particular, the graph of $\|\cdot\|_0$ is given by a finite union of product sets:
\begin{equation}\label{eq:graph}
\text{graph } \|\cdot \|_0 = \bigcup_{I \subset \{1, \dots, n\}} \left( \prod^n_{i=1} J^I_i\right) \times \{n-|I|\},
\end{equation}
where for any given $I\subset\{1, \dots, n\}$, $|I|$ denotes the cardinality of $I$ and 
$$J^I_i = \left\{\begin{array}{ll}
						\{0\}, & \text{if } i\in I, \\
						\R \setminus \{0\}, & \text{otherwise}.
\end{array}\right.$$
It is obvious that (\ref{eq:graph}) is a piecewise linear set, hence the claim. Lastly, the indicator function $I_R(x)$ defined by (\ref{eq:LR}) is also semi-algebraic, since the feasible set (\ref{eq:R}) is convex. 


\section{Numerical experiments and results}\label{Sec4}
In this section, we study the efficiency of the proposed portfolio optimization model, SEPO-$\ell_0$, in maximizing portfolio return and minimizing transaction cost.
We test our algorithm on real data of stock prices and returns of 100 companies across 10 different sectors in China, collected from January 2019 to June 2019. These data are in turn used to generate the covariance matrix, which gives us the portfolio variance as in our problem (\ref{OF}).
We start with equally-weighted portfolio, i.e. $x_i^0 = \frac{1}{n}$ for all $i$. We set $\varepsilon = 10^{-7}$ and stop the algorithm when $\left\| \nabla P\left( x^{k+1}, \lambda^{k+1}\right)\right\| < \varepsilon$ or $k > 10000$.
All computational results are obtained by running Matlab R2021a on Windows 10 (Intel Core i7 1065G7 16GB CPU @ 1.30 GHz $\sim$ 1.50GHz).

For testing purposes, we set our penalty parameter $\rho = 5$ and tuning parameter 
$\beta_2 = 1$. The latter means that we set our weight on the portfolio diversification as constant. Meanwhile, the value of $\beta_1$ is chosen to be relatively smaller than $\beta_2$.
For illustration, we present our results for minimum guaranteed return ratio $r=0.1$ and $r=0.2$. 

In Table \ref{T:r0.1}, we present the computational results of the expected return, variance risk and sparsity under the proposed SEPO-$\ell_0$ model and standard MVO model for different values of $\beta_1$ when we set the minimum guaranteed ratio to be 0.1 and 0.2, respectively. Note that though we leveraged on the variance risk when $\beta=1$, the portfolio selection under SEPO-$\ell_0$ manages to generate expected return of 0.3455 and 0.4014 when $r=0.1$ and $r=0.2$, respectively. Meanwhile, the standard MVO is only able to generate expected return of 0.1560 when we set $r=0.1$. The variance risks, however, are higher under the proposed model due to the sparsity, as compared to maximum diversification of the standard MVO. From the table, we can see that our model offers good level of sparsity between 30\% and 61\% when $r=0.1$ and between 52\% and 72\% when $r=0.2$. This simply means that out of 100 stocks considered under minimum expected return ratio $r=0.1$, one will only need to invest in the selected 39 -  70 stocks where the algorithm returns nonzero $x_i$'s. Despite the sparse portfolio selection and increased risk, we can see that the proposed model is more promising in terms of higher expected return.

\begin{table}[!th]
\begin{center}
\begin{threeparttable}
\begin{tabular}{|c|c|c|c|c|c|c|c|c|c|}
\hline
& \multicolumn{6}{c|}{$r=0.1$} & \multicolumn{3}{c|}{$r=0.2$} \\
\hline 
 &  \multicolumn{3}{c|}{SEPO-$\ell_0$}  &  \multicolumn{3}{c|}{Standard MVO}  & \multicolumn{3}{c|}{SEPO-$\ell_0$}  \\
\hline
$\beta_1$ & E.R. & V.R. & Spar & E.R. & V.R. & Spar & E.R. & V.R. & Spar \\
\hline
0.1 & 0.6355 & 3.2835 & 58\% & 0.6889 & 2.3603 & 0\% & 0.7441	&	4.3108	&	72\% \\
\hline
0.2		& 0.6279	&	2.9577	&	61\% & 0.5735 & 1.5138 & 0\% & 0.6732 &	3.2822	&	66\%\\
\hline
0.3		& 0.5050	&	2.1304	&	47\% & 0.4555 & 1.0320 & 0\% & 0.5829 &	2.4655	&	58\%\\
\hline
0.4		& 0.5180	&	2.0288	&	53\% & 0.3760 & 0.8114 & 0\% & 0.5796 &	2.2333	&	64\% \\
\hline
0.5		& 0.4865	&	1.7976	&	51\% & 0.3003 & 0.6689 & 0\% & 0.5374 &	1.9056	&	64\% \\
\hline
0.6		& 0.4237	&	1.5684	&	39\% & 0.2646 & 0.5785 & 0\% & 0.4675 &	1.6193	&	52\%\\
\hline
0.7		& 0.3677	&	1.4070	&	30\% & 0.2223 & 0.5324 &  0\% & 0.4655 & 1.4800	&	59\%\\
\hline
0.8		& 0.3581	&	1.3248	&	31\% & 0.2057 & 0.4930 & 0\% & 0.4521 &	1.3289	&	63\%\\
\hline
0.9		& 0.3787	&	1.2635	&	44\% & 0.1750 & 0.4704 & 0\% & 0.4182 &	1.2149	&	56\%\\
\hline
1		& 0.3455	&	1.1802	&	40\% & 0.1560 & 0.4501 & 0\% & 0.4014 &	1.1204	&	56\%\\
\hline
\end{tabular}
\begin{tablenotes}
      \small
      \item E.R. = Expected return, V.R. = Variance risk, Spar = Sparsity
    \end{tablenotes}
    \end{threeparttable}
\end{center}
\caption{The values of portfolio expected return, risk and sparsity for different $\beta_1$ with minimum guaranteed return ratio $r=0.1$ and $r=0.2$ under SEPO-$\ell_0$ and standard MVO}\label{T:r0.1}
\end{table} 

\begin{figure}[h!]
  \subfloat[Expected return]{
	\begin{minipage}[c][]{
	   0.45\textwidth}
	   \centering
	   \includegraphics[width=1\textwidth]{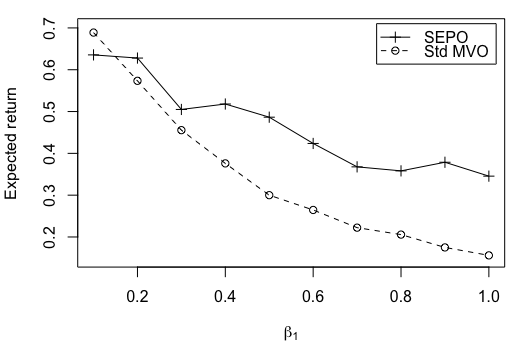}
	\end{minipage}}
 \hfill 	
  \subfloat[Variance risk]{
	\begin{minipage}[c][]{
	   0.45\textwidth}
	   \centering
	   \includegraphics[width=1\textwidth]{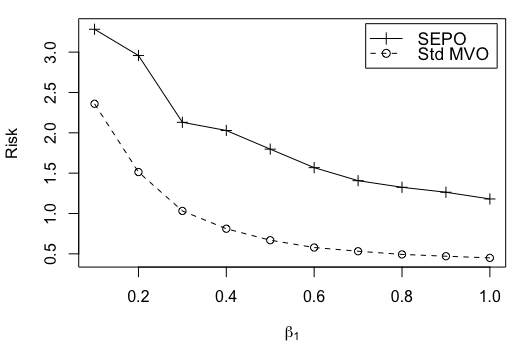}
	\end{minipage}}
\caption{SEPO-$\ell_0$ vs standard MVO with minumum guaranteed return ratio $r=0.1$}\label{Fig1}
\end{figure}

We also compare the expected return and variance risk for the SEPO-$\ell_0$ and standard MVO for $r=0.1$ by using scatterplot, as seen in Figure \ref{Fig1}. The downward trend of the portfolio expected return and risk mimic the standard MVO as $\beta \rightarrow 1$. Note that a higher value of $\beta_1$ reflects our leverage on the variance risk over expected return. At the same time, higher expected return results means higher risk as shown in Table \ref{T:r0.1}. In general, the standard MVO model gives lower measure for risk due to maximum diversification, as we can see from Table \ref{T:r0.1} and Figure \ref{Fig1}. The proposed SEPO-$\ell_0$, on the other hand, can lead to higher expected return and lower total transaction cost due to a sparse portfolio. This shows that SEPO-$\ell_0$ model is able to provide good combination of portfolio selection under sparsity.

To illustrate the reliability of our model, we present the output of the proposed model for $r=1$ using a scatterplot of the variables, as shown in Figure \ref{FigRplot01}, with $\beta_1$ as independent variable at $x$-axis, expected return and sparsity (in decimal) at the left $y$-axis, while risk’s scale is at the right of $y$-axis. We can observe a similar trend for the three lines, which clearly reflects the consistency of our model in obtaining optimal portfolio selection. 

\begin{figure}[h!]
\begin{center}
\includegraphics[width=0.7\textwidth]{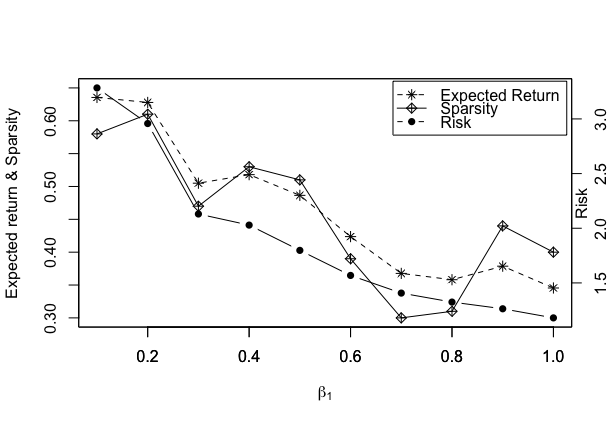}
\caption{Portfolio expected return, risk and sparsity subjected to minimum guaranteed return ratio $r=0.1$ under SEPO-$\ell_0$, for different values of $\beta_1$}\label{FigRplot01}
\end{center}
\end{figure}

The relationship between the independent variable $\beta_1$ and the response variables is further examined using multivariate linear regression model, as presented in Table \ref{T:p-value}. As we can see from the table, the estimates for response variables $y$ are all negative, which means their values decrease with the increase of $\beta_1$. Since the $p$-values of all response variables are approximately zero, it is clear that these three variables are significant. In particular, $\beta_1$ has significant negative relationship with expected return, risk and sparsity. 

\begin{table}[h!]
\begin{center}
\begin{tabular}{|L|L|L|L|L|L|}
\hline
Response variable & Estimate for intercept	& 	Estimate for $y$	& 	Standard error for $y$ & $p$-value for $y$ & R-squared\\
\hline
Expected return, $y_1$ & 0.6514 & -0.3396 & 0.0383 & 2.0737e-0.5& 0.9076\\
\hline
Risk, $y_2$ & 3.1246 & -2.2371 & 0.2992 & 7.0894e-0.5 & 0.8748 \\
\hline
Sparsity, $y_3$ & 0.6013 & -0.2679 & 0.0796 & 9.8657e-0.5 & 0.5859 \\
\hline
\end{tabular}
\end{center}
\caption{Relationship between the independent variable $\beta_1$ and the response variables using multivariate linear regression model for SEPO-$\ell_0$ with $r=0.1$}\label{T:p-value}
\end{table}

The significance of $\beta_1$ on these three dependent variables are supported by the values of R-squared of univariate regression, standing at 90.76\%, 87.48\% and 58.59\% for expected return, risk and sparsity, respectively. Since R-squared is the percentage of total variation contributed by predictor variable, the high R-squared values of greater than 80\% for expected return and risk mean that $\beta_1$ explains high percentage of the variance in these two response variables. It is slightly lower for sparsity, however any R-squared value greater than 50\% can be considered as moderately high.

\section{Conclusion}\label{Sec5}
The classical Markowitz portfolio scheme or mean-variance optimization (MVO) is one of the most successful framework due to the simplicity in implementation, in particular it can be solved by quadratic programming which is widely available. However, it is very sensitive to input parameter and obtaining acceptable solutions requires the right weight constraints. Over the past decade, there has been renewed attention in considering non-quadratic portfolio selection models, due to the advancement in optimization algorithms for solving more general class of functions. 
Here we proposed a new algorithmic framework that allows portfolio managers to strike a balance between diversifying investments and minimizing transaction cost, of which the latter is achieved by means of minimizing the $\ell_0$-norm. This simply means that the model maximizes sparsity within the portfolio, since the weights $x_i$ are forced to be zero except for large ones. In practice, the regularization of $\ell_0$ results in a discontinuous and nonconvex problem, and hence is often approximated via $\ell_1$-norm. In this study, we employed the proximal methods such that function can be 'smoothed', by means of linearizing part of the objective function at some given point and regularizing by a quadratic proximal term that acts as a measure for the "local error" in the approximation. Writing our problem in the form of augmented Lagrangian, the unconstrained problem can be divided into two parts, namely the smooth and non-smooth terms. These terms are then handled separately through their proximal methods via the ADMM method. The global convergence of the proposed SEPO-$\ell_0$ algorithm for sparse equity portfolio has been established. The efficiency of our model in maximizing portfolio expectedreturn while striking a balance between minimizing transaction cost and diversification has been analyzed using actual data of 100 companies. Emperically, the implementation of our model leads to higher expected return and lower transaction cost. This shows that, despite its higher risk as compared to the standard MVO, the SEPO-$\ell_0$ model is promising in generating a good combination for optimal investment portfolio. 

\bibliographystyle{plain}
\bibliography{Ref}

\begin{thebibliography}{10}

\bibitem{Beck2009}
A.~Beck and M.~Teboulle.
\newblock {\em Convex Optimization in Signal Processing and Communications},
  chapter Gradient-based algorithms with applications to signal recovery
  problems, pages 42--88.
\newblock Cambridge University Press, 2009.

\bibitem{Bert2016}
D.~P. Bertsekas.
\newblock {\em Nonlinear Programming}.
\newblock Athena Scientific, 3rd edition, 2016.

\bibitem{Best1991}
M.~J. Best and R.~R. Grauer.
\newblock On the sensitivity of mean-variance-efficient portfolios to changes
  in asset means: some analytical and computational results.
\newblock {\em The Review of Financial Studies}, 4(2):315--342, 1991.

\bibitem{Bochnak2013}
J.~Bochnak, M.~Coste, and M.~F. Roy.
\newblock {\em Real algebraic geometry}, volume~36.
\newblock Springer Science \& Business Media, 2013.

\bibitem{Bolte2007}
J.~Bolte, A.~Daniilidis, and A.~Lewis.
\newblock The {{\L}}ojasiewicz inequality for nonsmooth subanalytic functions
  with applications to subgradient dynamical systems.
\newblock {\em SIAM Journal on Optimization}, 17(4):1205--1223, 2007.

\bibitem{Bolte2007b}
J.~Bolte, A.~Daniilidis, A.~Lewis, and M.~Shiota.
\newblock Clarke subgradients of stratifiable functions.
\newblock {\em SIAM Journal on Optimization}, 18(2):556--572, 2007.

\bibitem{Bolte2014}
J.~Bolte, S.~Sabach, and M.~Teboulle.
\newblock Proximal alternating linearized minimization for nonconvex and
  nonsmooth problems.
\newblock {\em Mathematical Programming}, 146(1):459--494, 2014.

\bibitem{Boyd2011}
S.~Boyd, N.~Parikh, E.~Chu, B.~Peleato, and J.~Eckstein.
\newblock Distributed optimization and statistical learning via the alternating
  direction method of multipliers.
\newblock {\em Foundations and Trends\textsuperscript{\textregistered} in
  Machine Learning}, 3(1):1--122, 2011.

\bibitem{Brodie2009}
J.~Brodie, I.~Daubechies, C.~De~Mol, D.~Giannone, and I.~Loris.
\newblock Sparse and stable {M}arkowitz portfolios.
\newblock {\em Proceedings of the National Academy of Sciences},
  106(30):12267--12272, 2009.

\bibitem{Chen2020}
J.~Chen, G.~Dai, and N.~Zhang.
\newblock An application of sparse-group lasso regularization to equity
  portfolio optimization and sector selection.
\newblock {\em Annals of Operations Research}, 284(1):243--262, 2020.

\bibitem{Dai2018}
Z.~Dai and F.~Wen.
\newblock A generalized approach to sparse and stable portfolio optimization
  problem.
\newblock {\em Journal of Industrial \& Management Optimization},
  14(4):1651--1666, 2018.

\bibitem{Daubechies2004}
I.~Daubechies, M.~Defrise, and C.~De~Mol.
\newblock An iterative thresholding algorithm for linear inverse problems with
  a sparsity constraint.
\newblock {\em Communications on Pure and Applied Mathematics: A Journal Issued
  by the Courant Institute of Mathematical Sciences}, 57(11):1413--1457, 2004.

\bibitem{DeMol2016}
C.~De~Mol.
\newblock {\em Financial Signal Processing and Machine Learning}, chapter
  Sparse Markowitz Portfolios, pages 11--22.
\newblock Wiley Online Library, 2016.

\bibitem{DeMiguel2009}
V.~DeMiguel, L.~Garlappi, F.~J. Nogales, and R.~Uppal.
\newblock A generalized approach to portfolio optimization: {I}mproving
  performance by constraining portfolio norms.
\newblock {\em Management Science}, 55(5):798--812, 2009.

\bibitem{Fastrich2015}
B.~Fastrich, S.~Paterlini, and P.~Winker.
\newblock Constructing optimal sparse portfolios using regularization methods.
\newblock {\em Computational Management Science}, 12(3):417--434, 2015.

\bibitem{Fazel2013}
M.~Fazel, T.~K. Pong, D~Sun, and P~Tseng.
\newblock Hankel matrix rank minimization with applications to system
  identification and realization.
\newblock {\em SIAM Journal on Matrix Analysis and Applications},
  34(3):946--977, 2013.

\bibitem{Gabay1976}
D.~Gabay and B.~Mercier.
\newblock A dual algorithm for the solution of nonlinear variational problems
  via finite element approximation.
\newblock {\em Computers \& Mathematics with Applications}, 2(1):17--40, 1976.

\bibitem{Lai2018}
Z.~R. Lai, P.~Y. Yang, L.~Fang, and X.~Wu.
\newblock Short-term sparse portfolio optimization based on alternating
  direction method of multipliers.
\newblock {\em The Journal of Machine Learning Research}, 19:1--28, 2018.

\bibitem{Loj1963}
S.~{\L}ojasiewicz.
\newblock Une propri{\'e}t{\'e} topologique des sous-ensembles analytiques
  r{\'e}els.
\newblock {\em Les {\'e}quations aux d{\'e}riv{\'e}es partielles}, 117:87--89,
  1963.

\bibitem{Markowitz1952}
H.~Markowitz.
\newblock Portfolio selection.
\newblock {\em Journal of Finance}, 7(1):77--91, 1952.

\bibitem{Michaud1989}
R.~O. Michaud.
\newblock The {M}arkowitz optimization enigma: Is ‘optimized’optimal?
\newblock {\em Financial Analysts Journal}, 45(1):31--42, 1989.

\bibitem{Perrin2020}
S.~Perrin and T.~Roncalli.
\newblock Machine learning optimization algorithms \& portfolio allocation.
\newblock {\em Machine Learning for Asset Management: New Developments and
  Financial Applications}, pages 261--328, 2020.

\bibitem{Rock1998}
R.~T. Rockafellar and R.~J.~B Wets.
\newblock {\em Variational analysis}.
\newblock Springer-Verlag New York, 1998.

\end{thebibliography}

\end{document}